\documentclass[a4paper]{amsart}[11pt]
\parskip=11pt
\def\doctype{}
\usepackage{latexsym,amssymb,amsthm}
\usepackage{color}
\usepackage{dsfont}
\usepackage{fancyhdr}
\usepackage{nicematrix}
\usepackage{tikz}
\usepackage{hyperref}
\usepackage{float}
\usepackage{enumitem}

\newcommand\A{\mathrm{A}}

\renewcommand\S{\mathrm{S}}

\newcommand\fix{\mathrm{fix}}

\newcommand{\comment}[1]{}


\setcounter{MaxMatrixCols}{20}
\setlength\parindent{0cm}
\setlength{\textwidth}{6.3in}
\setlength{\oddsidemargin}{0.07in}
\setlength{\evensidemargin}{0.07in}
\setlength{\topmargin}{-0.2in}
\setlength{\textheight}{9.5in}


\pagestyle{fancy}
\fancyhead[C]{}
\fancyhead[R]{}
\fancyhead[L]{}
\cfoot{\vspace{5pt} \thepage}

\fancypagestyle{titlepage}{
\fancyhead[R]{\doctype}
\fancyhead[CL]{}
\cfoot{\vspace{5pt} \thepage}
}


\let\oldsection\section
\newcommand\boldsection[1]{\oldsection{\bf #1}}
\newcommand\starsection[1]{\oldsection*{\bf #1}}
\makeatletter
\renewcommand\section{\@ifstar\starsection\boldsection}
\makeatother

\newtheoremstyle{algorithm}
  {12pt}		  
  {0pt}  
  {\tt}  
  {\parindent}     
  {\bf}  
  {. }    
  {\newline}    
  {}     
\theoremstyle{algorithm}

\newtheoremstyle{theorem}
  {12pt}		  
  {0pt}  
  {\sl}  
  {\parindent}     
  {\bf}  
  {. }    
  { }    
  {}     
\theoremstyle{theorem}
\newtheorem{thm}{Theorem}[section]  
\newtheorem{lemma}[thm]{Lemma}     

\newtheorem{prop}[thm]{Proposition}

\newtheoremstyle{definition}
  {12pt}		  
  {0pt}  
  {}  
  {\parindent}     
  {\bf}  
  {. }    
  { }    
  {}     
\theoremstyle{definition}

\renewcommand{\proofname}{Proof}

\makeatletter
\renewenvironment{proof}[1][\proofname]{\par
  \pushQED{\qed}%
  \normalfont \partopsep=\z@skip \topsep=\z@skip
  \trivlist
  \item[\hskip\labelsep
        \scshape
    #1\@addpunct{.}]\ignorespaces
}{%
  \popQED\endtrivlist\@endpefalse
}
\makeatother


\makeatletter
\renewcommand*\@maketitle{%
  \normalfont\normalsize
  \@adminfootnotes
  \@mkboth{\@nx\shortauthors}{\@nx\shorttitle}%
  \global\topskip0\p@\relax 
  \@settitle
  \ifx\@empty\authors \else {\vskip 1em
\vtop{\centering\shortauthors\@@par}} \fi
  \ifx\@empty\@date \else {\vskip 1em \vtop{\centering\@date\@@par}}\fi 
  \ifx\@empty\@dedicatory
  \else
    \baselineskip18\p@
    \vtop{\centering{\footnotesize\itshape\@dedicatory\@@par}%
      \global\dimen@i\prevdepth}\prevdepth\dimen@i
  \fi
  \@setabstract
  \normalsize
  \if@titlepage
    \newpage
  \else
    \dimen@34\p@ \advance\dimen@-\baselineskip
    \vskip\dimen@\relax
  \fi
} 
\renewcommand*\@adminfootnotes{%
  \let\@makefnmark\relax  \let\@thefnmark\relax
  \ifx\@empty\@subjclass\else \@footnotetext{\@setsubjclass}\fi
  \ifx\@empty\@keywords\else \@footnotetext{\@setkeywords}\fi
  \ifx\@empty\thankses\else \@footnotetext{%
    \def\par{\let\par\@par}\@setthanks}%
  \fi
\thispagestyle{titlepage}
}
\makeatother

\pgfsetlayers{nodelayer,edgelayer} 
\usepackage{graphicx}

\begin{document}
\title[]{\large A character theoretic\\ formula for base size}

\author{Coen del Valle}
\address{
School of Mathematics and Statistics,
University of St Andrews, St Andrews, UK
}
\email{cdv1@st-andrews.ac.uk}

\thanks{The author is grateful to Pablo Spiga for bringing this problem to his attention. The author would also like to thank his supervisors Colva Roney-Dougal and Peter Cameron for their support and guidance as well as Peiran Wu for interesting early discussions. Research of Coen del Valle is supported by the Natural Sciences and Engineering Research Council of Canada (NSERC), [funding reference number PGSD-577816-2023], as well as a University of St Andrews School of Mathematics and Statistics Scholarship.}
\keywords{base size, irreducible character, large base}

\date{\today}

\begin{abstract}
A base for a permutation group $G$ acting on a set $\Omega$ is a sequence $\mathcal{B}$ of points of $\Omega$ such that the pointwise stabiliser $G_{\mathcal{B}}$ is trivial. The base size of $G$ is the size of a smallest base for $G$. We derive a character theoretic formula for the base size of a class of groups admitting a certain kind of irreducible character. Moreover, we prove a formula for enumerating the non-equivalent bases for $G$ of size $l\in\mathbb{N}$. As a consequence of our results, we present a very short, entirely algebraic proof of the formula of Mecenero and Spiga~\cite{MeSp} for the base size of the symmetric group $\S_n$ acting on the $k$-element subsets of $\{1,2,3,\dots,n\}$. Our methods also provide a formula for the base size of many product-type permutation groups. 
\end{abstract}

\maketitle
\hrule

\bigskip

\section{Introduction}
A \emph{base} for a permutation group $G$ acting on a finite set $\Omega$ is a sequence $\mathcal{B}$ of points of $\Omega$ with trivial pointwise stabiliser in $G$. The size $b(G)$ of a smallest base for $G$ is called the \emph{base size} of $G$. In 1992, Blaha ~\cite{blaha} showed that the problem of finding a minimum base for an arbitrary group $G$ is NP-hard. Despite this, much work has been done towards determining the base size of certain families of groups, especially primitive groups.

Let $n$ and $k$ be positive integers with $n>2k$ and let $\S_{n,k}$ be the symmetric group $\S_n$ acting on the $k$-element subsets of $[n]:=\{1,2,\dots,n\}$. Recently, in two independent papers~\cite{dvrd,MeSp} precise formulae were determined for the base size $b(\S_{n,k})$, both of which take remarkably different forms. The two results are proved using predominantly combinatorial techniques, the arguments mostly using the language of graphs and hypergraphs. During the review process, an anonymous referee made an astute observation: the beautiful formula of Mecenero and Spiga~\cite[Theorem 1.1]{MeSp} has an entirely character theoretic interpretation. In particular, after some straightforward algebraic manipulation one derives that if $\mathrm{sgn}$ is the sign character of $\S_n$ and $\chi$ is the permutation character of $\S_{n,k}$, then $$b(\S_{n,k})=\min\{l\in \mathbb{N} : \langle \mathrm{sgn},\chi^l\rangle\ne0\}.$$

Recently, Pablo Spiga asked whether this connection was purely coincidental. This paper gives an entirely character theoretic formula for the base size of groups admitting a certain kind of homomorphism which we call \emph{base-controlling}.

Let $G$ be a permutation group acting faithfully on a finite set $\Omega$. We say that a homomorphism {${\phi:G\to\{1,-1\}}$} is \emph{base-controlling} if for every tuple $\mathcal{A}$ of points of $\Omega$, $\mathcal{A}$ is a base if and only if $\phi(G_{\mathcal{A}})=1$. Note that any base-controlling homomorphism of $G$ is an irreducible character of $G$. Define $\langle - , - \rangle$ to be the standard scalar product of (complex-valued) class functions. That is, for class functions $\varphi_1,\varphi_2$ of $G$, $$\langle\varphi_1,\varphi_2\rangle=|G|^{-1}\sum_{g\in G}\varphi_1(g)\overline{\varphi_2(g)}.$$

Our first main result allows us to enumerate the non-equivalent bases of any given size.
\begin{thm}\label{enum}
Let $G$ be a permutation group acting on a set $\Omega$ with permutation character $\chi$. If $G$ admits a base-controlling homomorphism $\phi$, then $G$ has exactly $\langle\phi,\chi^l\rangle$ regular orbits on $\Omega^l$.
\end{thm}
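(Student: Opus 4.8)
The plan is to compute $\langle \phi, \chi^l\rangle$ directly from the definition of the scalar product and interpret the resulting sum combinatorially. First I would recall that $\chi^l$ is the permutation character of the action of $G$ on $\Omega^l$ (the $l$-fold Cartesian power), since the permutation character of a product action is the product of the permutation characters; concretely, $\chi^l(g) = |\fix_{\Omega^l}(g)|$ is the number of fixed points of $g$ on $\Omega^l$. Thus
\[
\langle \phi, \chi^l\rangle = \frac{1}{|G|}\sum_{g\in G}\phi(g)\,|\fix_{\Omega^l}(g)| = \frac{1}{|G|}\sum_{\mathcal{A}\in\Omega^l}\sum_{g\in G_{\mathcal{A}}}\phi(g),
\]
where the double sum has been reorganised by counting pairs $(g,\mathcal{A})$ with $g$ fixing $\mathcal{A}$. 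The inner sum $\sum_{g\in G_{\mathcal{A}}}\phi(g)$ is, by the standard orthogonality of the trivial and a nontrivial linear character over a finite group, equal to $|G_{\mathcal{A}}|$ if $\phi$ restricted to $G_{\mathcal{A}}$ is trivial, and $0$ otherwise.

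Now the base-controlling hypothesis enters: by definition, $\phi(G_{\mathcal{A}}) = \{1\}$ precisely when $\mathcal{A}$ is a base, i.e.\ when $G_{\mathcal{A}} = 1$, which forces $|G_{\mathcal{A}}| = 1$; and if $\mathcal{A}$ is not a base then $\phi$ is nontrivial on $G_{\mathcal{A}}$ so the inner sum vanishes. Hence
\[
\langle \phi, \chi^l\rangle = \frac{1}{|G|}\sum_{\substack{\mathcal{A}\in\Omega^l \\ \mathcal{A}\text{ a base}}} 1 = \frac{|\{\mathcal{A}\in\Omega^l : G_{\mathcal{A}}=1\}|}{|G|}.
\]
The tuples $\mathcal{A}$ with trivial stabiliser are exactly the points of $\Omega^l$ lying in regular $G$-orbits, and each such orbit has size exactly $|G|$; therefore the right-hand side counts the number of regular orbits of $G$ on $\Omega^l$. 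Since two bases are equivalent precisely when they lie in the same $G$-orbit, this is the desired count of non-equivalent bases of size $l$.

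The argument is essentially a sequence of routine manipulations, so there is no single hard step; the one point requiring a little care is justifying that $\phi$ being base-controlling really does make the inner sum collapse to an indicator of ``$\mathcal{A}$ is a base.'' The subtlety is that, a priori, one might worry about tuples $\mathcal{A}$ whose stabiliser $G_{\mathcal{A}}$ is nontrivial but nevertheless contained in $\ker\phi$; the base-controlling condition is exactly what rules this out, since it asserts $\phi(G_{\mathcal{A}})=1 \iff \mathcal{A}$ is a base. I would make this observation explicit, and also note in passing (as the excerpt already remarks) that a base-controlling homomorphism is automatically an irreducible character, so $\langle\phi,\chi^l\rangle$ is genuinely a multiplicity and in particular a non-negative integer, consistent with its interpretation as a count of orbits.
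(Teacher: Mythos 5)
Your proof is correct, and it takes a genuinely different route from the paper's. You compute $\langle\phi,\chi^l\rangle$ in one pass via a ``twisted'' orbit count: reorganising $\sum_{g}\phi(g)|\fix_{\Omega^l}(g)|$ as a sum over pairs $(g,\mathcal{A})$ and collapsing the inner sum $\sum_{g\in G_{\mathcal{A}}}\phi(g)$ by orthogonality, so that the base-controlling hypothesis turns it into the indicator of ``$G_{\mathcal{A}}=1$'' and the whole expression becomes $|\{\mathcal{A}\in\Omega^l:G_{\mathcal{A}}=1\}|/|G|$, i.e.\ the number of regular orbits. The paper instead sets $K=\ker\phi$ and proceeds in two steps: first it applies the Orbit-Counting Lemma separately to $G$ and to $K$ to show $\langle\phi,\chi^l\rangle=o_K(l)-o(l)$, the difference between the numbers of $K$-orbits and $G$-orbits on $\Omega^l$; then it observes that, since $|G:K|=2$, this difference counts the $G$-orbits that split into two $K$-orbits, which are exactly those with $G_{\mathcal{A}}\leq K$, hence (by the base-controlling property) exactly the regular orbits. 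The two arguments invoke the hypothesis at the same logical point --- identifying the tuples on which $\phi$ restricts trivially with the bases --- but your version is more direct and makes the nonnegativity and integrality of $\langle\phi,\chi^l\rangle$ visibly a count of tuples, whereas the paper's version yields the intermediate identity $\langle\phi,\chi^l\rangle=o_K(l)-o(l)$, which is of some independent interest. Your closing remark about non-equivalent bases is not needed for the statement as given (which concerns regular orbits), but it is a correct gloss.
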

As an immediate consequence to Theorem~\ref{enum}, we deduce a character theoretic formula for base size, generalising~\cite[Theorem 3.1]{MeSp}.
\begin{thm}\label{main}
Let $G$ be a permutation group with permutation character $\chi$. If $G$ admits a base-controlling homomorphism $\phi$, then $b(G)=\min\{l \in\mathbb{N}: \langle\phi,\chi^l\rangle\ne 0\}.$
\end{thm}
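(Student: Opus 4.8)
The plan is to deduce Theorem~\ref{main} directly from Theorem~\ref{enum} with essentially no further work. The key observation is that $b(G)$ is, by definition, the smallest $l$ for which some $l$-tuple of points of $\Omega$ is a base, i.e.\ has trivial pointwise stabiliser. A tuple $\mathcal{A}\in\Omega^l$ is a base precisely when its $G$-orbit is regular (of size $|G|$), since $|G\cdot\mathcal{A}|=|G|/|G_{\mathcal{A}}|$. Hence $G$ has a base of size $l$ if and only if $G$ has at least one regular orbit on $\Omega^l$.

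First I would record that for each $l\in\mathbb{N}$, the quantity $\langle\phi,\chi^l\rangle$ is a non-negative integer: it counts regular orbits by Theorem~\ref{enum}, so in particular it cannot be negative. (Alternatively one notes $\chi^l$ is the permutation character of the product action of $G$ on $\Omega^l$, and $\phi$ is an irreducible constituent appearing with multiplicity equal to the number of regular orbits.) Then the chain of equivalences reads: $b(G)\le l$ $\iff$ $G$ has a base of size $l$ $\iff$ $G$ has a regular orbit on $\Omega^l$ $\iff$ $\langle\phi,\chi^l\rangle\ge 1$ $\iff$ $\langle\phi,\chi^l\rangle\ne 0$. Taking the minimum over $l$ of the outer condition yields $b(G)=\min\{l\in\mathbb{N}:\langle\phi,\chi^l\rangle\ne0\}$.

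One small point worth addressing carefully is the monotonicity/well-definedness: we should confirm that the set $\{l:\langle\phi,\chi^l\rangle\ne0\}$ is non-empty, so that the minimum exists. This follows because $G$ acts faithfully on $\Omega$, so some finite tuple of points has trivial stabiliser (for instance, listing all of $\Omega$), giving a base of some size $l_0$ and hence $\langle\phi,\chi^{l_0}\rangle\ne0$. It is also implicitly using that a base of size $l$ need not force the existence of a base of size $l-1$, which is fine — we only take the minimum, we do not claim the set is an up-set.

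I do not anticipate a genuine obstacle here; the content is entirely front-loaded into Theorem~\ref{enum}, and the deduction is a one-line translation between "regular orbit exists" and "inner product is nonzero". The only thing to be slightly careful about is not conflating the base-controlling hypothesis with needing $\phi$ to be nontrivial or $G$ to be nonabelian — neither is required, and the argument goes through verbatim using only that the number of regular orbits on $\Omega^l$ equals $\langle\phi,\chi^l\rangle$.
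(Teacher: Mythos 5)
Your deduction is correct and is exactly what the paper intends: Theorem~\ref{main} is stated there as an immediate consequence of Theorem~\ref{enum}, via the observation that a tuple is a base precisely when its orbit is regular, so $b(G)$ is the least $l$ with a regular orbit on $\Omega^l$, i.e.\ with $\langle\phi,\chi^l\rangle\ne 0$. No differences worth noting.
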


Theorem~\ref{enum} has another powerful consequence. Let $G$ be as in the set-up of Theorem~\ref{enum}, and let $P\leq\S_m$ for some $m$. Define $D(P)$ to be the \emph{distinguishing number} of $P$, that is, the minimum number of parts in a partition of $[m]$ such that the pointwise stabiliser in $P$ of these parts is trivial. In ~\cite{bc} it is shown that $b(G\wr P)$ (with product action) is the minimum $l$ such that the number of regular orbits of $G$ on $\Omega^l$ is at least $D(P)$, hence we deduce the following corollary to Theorem ~\ref{enum}.
\begin{thm}\label{wreath}
Let $G$ be a permutation group with permutation character $\chi$, and let $P\leq \S_m$ for some integer $m$. If $G$ admits a base-controlling homomorphism $\phi$, then $$b(G\wr P)=\min\{l\in \mathbb{N} : \langle \phi, \chi^l\rangle\geq D(P)\}.$$
\end{thm}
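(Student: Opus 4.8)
Theorem~\ref{wreath} is essentially a bridge between two already-established facts, so the proof is a short assembly. The plan is to invoke the result of~\cite{bc} — that $b(G\wr P)$ (in product action on $\Omega^m$) equals the least $l$ for which the number of regular orbits of $G$ on $\Omega^l$ is at least $D(P)$ — and then translate the condition ``number of regular orbits of $G$ on $\Omega^l$ is at least $D(P)$'' into character-theoretic language using Theorem~\ref{enum}.

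**Key steps.** First I would recall the statement from~\cite{bc} precisely, being careful that the hypotheses there are met: $G$ acts faithfully on $\Omega$ and $P\leq\S_m$, so $G\wr P$ acts faithfully on $\Omega^m$ in the product action, and the cited theorem applies verbatim. Second, I would apply Theorem~\ref{enum}: since $G$ admits the base-controlling homomorphism $\phi$ and has permutation character $\chi$, the number of regular orbits of $G$ on $\Omega^l$ is exactly $\langle\phi,\chi^l\rangle$ for every $l\in\mathbb{N}$. Third, I would substitute this equality into the characterisation from~\cite{bc}: the condition ``$G$ has at least $D(P)$ regular orbits on $\Omega^l$'' becomes ``$\langle\phi,\chi^l\rangle\geq D(P)$'', and hence
\[
b(G\wr P)=\min\{l\in\mathbb{N}:\langle\phi,\chi^l\rangle\geq D(P)\},
\]
as claimed. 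One small point worth a sentence: the function $l\mapsto\langle\phi,\chi^l\rangle$ need not be monotone in general, but the minimum is still well-defined and agrees with the formula in~\cite{bc}, since that result is itself phrased as a minimum over $l$ and makes no monotonicity assumption.

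**Main obstacle.** There is essentially no hard step — the entire content is in Theorem~\ref{enum} and in the result of~\cite{bc}, both of which we are entitled to quote. The only thing requiring care is bookkeeping: making sure the notion of ``regular orbit of $G$ on $\Omega^l$'' is used in exactly the same sense in Theorem~\ref{enum} and in~\cite{bc} (namely, an orbit on which $G$ acts regularly, equivalently a point of $\Omega^l$ with trivial stabiliser, counted up to the $G$-action), and confirming that~\cite{bc} indeed requires only the number of regular orbits rather than some finer invariant. Once those conventions are aligned, the proof is a two-line substitution.
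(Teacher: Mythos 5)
Your proposal is correct and is exactly the paper's argument: the author likewise quotes the result of~\cite{bc} that $b(G\wr P)$ is the least $l$ for which $G$ has at least $D(P)$ regular orbits on $\Omega^l$, and substitutes the count $\langle\phi,\chi^l\rangle$ from Theorem~\ref{enum}. Your extra remarks on faithfulness, the meaning of ``regular orbit'', and the irrelevance of monotonicity are sensible bookkeeping but add nothing beyond what the paper's one-line deduction already contains.
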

In Section~\ref{2} we prove Theorem~\ref{enum}, hence deducing Theorems~\ref{main} and~\ref{wreath}. In Section~\ref{3} we discuss some examples and non-examples of groups admitting base-controlling homomorphisms, in particular, we recover the formula for $b(\S_{n,k})$ of Mecenero and Spiga~\cite{MeSp}, and deduce sharp bounds on the base size of large base groups.

\section{Proving Theorem~\ref{enum}}\label{2}
Throughout this section let $G$ be a permutation group acting faithfully on a finite set $\Omega$ with permutation character $\chi$. Suppose that $G$ admits a base-controlling homomorphism $\phi$, and define $K:=\ker\phi$. Given $l\in\mathbb{N}$, let $o(l)$ and $o_K(l)$ be the numbers of $G$-orbits and $K$-orbits on $\Omega^l$, respectively. 

\begin{lemma}\label{interpretation}
Fix $l\in\mathbb{N}$. Then $\langle\phi,\chi^l\rangle=o_K(l)-o(l)$.
\end{lemma}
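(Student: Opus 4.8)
The plan is to express both sides as combinations of inner products of permutation characters and then equate them by a one-line computation (equivalently, by Frobenius reciprocity). First I would record the two standard facts that make the two sides comparable. Since $\chi(g)=|\fix_\Omega(g)|$ for each $g\in G$, the diagonal action of $G$ on $\Omega^l$ has permutation character $g\mapsto|\fix_\Omega(g)|^l=\chi(g)^l$, so in particular the restriction $\chi^l|_K$ is the permutation character of $K$ acting on $\Omega^l$. Applying the orbit-counting lemma to each of these two actions yields $\langle\one_G,\chi^l\rangle_G=o(l)$ and $\langle\one_K,\chi^l|_K\rangle_K=o_K(l)$.

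Next I would pin down that $|G:K|=2$, which is the single place where the base-controlling hypothesis is used. We may assume $G\ne 1$, as otherwise there is nothing of interest. If $\phi$ were trivial, then by the defining property of a base-controlling homomorphism \emph{every} tuple of points of $\Omega$ would be a base; applying this to the empty tuple, whose pointwise stabiliser is $G$, would force $G=1$, contrary to assumption. Hence $\phi$ is surjective and $K$ has index $2$ in $G$. Consequently $G$ acts on the two cosets of $K$ with kernel exactly $K$, and the permutation character of that action is $\one_G+\phi$; that is, $\mathrm{Ind}_K^G\one_K=\one_G+\phi$.

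Finally I would assemble the pieces. By Frobenius reciprocity,
$$\langle\phi,\chi^l\rangle_G+o(l)=\langle\one_G+\phi,\chi^l\rangle_G=\langle\mathrm{Ind}_K^G\one_K,\chi^l\rangle_G=\langle\one_K,\chi^l|_K\rangle_K=o_K(l),$$
and rearranging gives $\langle\phi,\chi^l\rangle=o_K(l)-o(l)$, as claimed. Alternatively, one can avoid induced characters entirely: split $\langle\phi,\chi^l\rangle=|G|^{-1}\bigl(\sum_{g\in K}\chi^l(g)-\sum_{g\in G\setminus K}\chi^l(g)\bigr)$, substitute $\sum_{g\in K}\chi^l(g)=|K|\,o_K(l)$ and $\sum_{g\in G}\chi^l(g)=|G|\,o(l)$ (again by orbit-counting), and use $|K|=|G|/2$. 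Either way, the only step that is not purely mechanical is the index-$2$ claim; once that is secured the identity is immediate, so I do not expect any real obstacle here.
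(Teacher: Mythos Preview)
Your proof is correct. Your alternative computation at the end is exactly the paper's argument: split $\langle\phi,\chi^l\rangle$ over $K$ and $G\setminus K$, apply orbit-counting to $G$ and to $K$ on $\Omega^l$, and use $|K|=|G|/2$. Your primary route via $\mathrm{Ind}_K^G\one_K=\one_G+\phi$ and Frobenius reciprocity is a mild repackaging of the same computation---it replaces the explicit coset split by the representation-theoretic identity, which is arguably cleaner but not substantively different. One point where you are more careful than the paper: you justify $|G:K|=2$ by observing that a trivial $\phi$ together with the base-controlling hypothesis would force the empty tuple to be a base and hence $G=1$; the paper simply uses $|K|^{-1}=2|G|^{-1}$ without comment, tacitly assuming $\phi$ is nontrivial.
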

\begin{proof}
By the Orbit-Counting Lemma, $$o(l)=|G|^{-1}\sum_{g\in G}\fix_{\Omega^l}(g)=|G|^{-1}\left(\sum_{g\in K}\chi(g)^l\right)+|G|^{-1}\left(\sum_{g\in G\setminus K}\chi(g)^l\right),$$ whilst $$o_K(l)=|K|^{-1}\left(\sum_{g\in K}\chi(g)^l\right)=2\cdot|G|^{-1}\left(\sum_{g\in K}\chi(g)^l\right).$$ Therefore, \begin{equation*}\label{orbdif}o_K(l)-o(l)=\left(|G|^{-1}\sum_{g\in K}\chi(g)^l\right)-\left(|G|^{-1}\sum_{g\in G\setminus K}\chi(g)^l\right)=|G|^{-1}\sum_{g\in G}\phi(g)\overline{\chi(g)^l}=\langle\phi,\chi^l\rangle,\end{equation*} as desired.
\end{proof}

By Lemma~\ref{interpretation}, to prove Theorem~\ref{enum} it only remains to show that $o_K(l)-o(l)$ is the number of regular $G$-orbits on $\Omega^l$.

\begin{lemma}\label{lem2}
Fix $l\in\mathbb{N}$. Then $o_K(l)-o(l)$ is the number of regular $G$-orbits on $\Omega^l$.
\end{lemma}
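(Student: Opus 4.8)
The plan is to partition the $G$-orbits on $\Omega^l$ into two types and count the contribution of each to the difference $o_K(l)-o(l)$. Fix $l\in\mathbb{N}$ and let $\mathcal{O}$ be a $G$-orbit on $\Omega^l$, say $\mathcal{O}=\vx^G$ for some $\vx\in\Omega^l$. The orbit-stabiliser theorem gives $|\mathcal{O}|=[G:G_{\vx}]$, and since $K$ has index $2$ in $G$, the orbit $\mathcal{O}$ breaks into either one or two $K$-orbits: it splits into two exactly when $G_{\vx}\leq K$, and remains a single $K$-orbit exactly when $G_{\vx}\not\leq K$, i.e. when $G_{\vx}K=G$. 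Thus each $G$-orbit contributes either $+1$ (if it splits) or $0$ (if it does not) to $o_K(l)-o(l)$, so $o_K(l)-o(l)$ equals the number of $G$-orbits $\vx^G$ on $\Omega^l$ with $G_{\vx}\leq K$, equivalently $\phi(G_{\vx})=1$.

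The key step is then to invoke the hypothesis that $\phi$ is base-controlling: for a tuple $\vx\in\Omega^l$, we have $\phi(G_{\vx})=1$ if and only if $\vx$ is a base for $G$, i.e. $G_{\vx}=1$. Since $\phi$ is a genuine homomorphism, $\phi(G_{\vx})=1$ means $\phi(g)=1$ for every $g\in G_{\vx}$, which is precisely the condition $G_{\vx}\leq K$ appearing above. Hence the $G$-orbits counted in the previous paragraph are exactly the orbits $\vx^G$ consisting of bases. But a tuple $\vx$ is a base precisely when $G_{\vx}=1$, which by orbit-stabiliser is equivalent to $|\vx^G|=|G|$, i.e. $\vx^G$ is a regular $G$-orbit. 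Therefore the number of $G$-orbits with $G_{\vx}\leq K$ is exactly the number of regular $G$-orbits on $\Omega^l$, and combining with the first paragraph gives $o_K(l)-o(l)$ equal to the number of regular $G$-orbits, as required.

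I should double-check the one subtle point: the equivalence ``$\mathcal{O}$ splits into two $K$-orbits $\iff G_{\vx}\leq K$''. This is standard: $K$ acts on $\mathcal{O}$, and for $\vx\in\mathcal{O}$ the number of $K$-orbits in $\mathcal{O}$ is $[G:K]/[\,\text{something}\,]$; more directly, the $K$-orbits in $\mathcal{O}$ are the cosets of $G_{\vx}K$ in $G$, of which there are $[G:G_{\vx}K]$, and since $[G:K]=2$ this is $2$ when $G_{\vx}\leq K$ and $1$ otherwise. I expect this bookkeeping to be the only place requiring care; everything else is a direct application of the base-controlling hypothesis and orbit-stabiliser. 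One should also note the consistency check that a regular orbit, having $G_{\vx}=1\leq K$, indeed always splits, matching the fact that regular orbits contribute the ``extra'' $K$-orbits counted by $o_K(l)-o(l)$.
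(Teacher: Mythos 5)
Your proof is correct and follows essentially the same route as the paper: both identify $o_K(l)-o(l)$ with the number of $G$-orbits splitting into two $K$-orbits and then use the base-controlling hypothesis to show these are exactly the regular orbits. The only cosmetic difference is that you characterise splitting via the criterion $G_{\vx}\leq K$ (equivalently $[G:G_{\vx}K]=2$) and check both implications directly, whereas the paper dispatches the ``regular implies split'' direction with the one-line observation that a $K$-orbit has at most $|K|<|G|$ points.
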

\begin{proof}
Since $|G:K|=2$ each orbit of $G$ on $\Omega^l$ is the union of either one or two $K$-orbits. Thus, $o_K(l)-o(l)$ is the number of $G$-orbits which are the unions of exactly two $K$-orbits. Since every $K$-orbit has fewer than $|G|$ points, it suffices to show that any $G$-orbit which is the union of two $K$-orbits is regular.

Suppose that $\mathcal{A}^G$ is the union of two $K$-orbits. Then $G_{\mathcal{A}}=K_{\mathcal{A}}\leq K$, but $\phi$ is base-controlling, whence $\mathcal{A}$ is a base for $G$. That is, $\mathcal{A}^G$ is a regular $G$-orbit, as was to be shown.
\end{proof}

Putting together Lemmas~\ref{interpretation} and~\ref{lem2} we deduce Theorem~\ref{enum}.

\section{Examples and applications}\label{3}
In this section we present some examples and non-examples of groups admitting base-controlling homomorphisms, and we give an application of Theorem~\ref{wreath}.

We begin by noting that the character $\mathrm{sgn}$ of $\S_{n,k}$ is base-controlling. Indeed, the pointwise stabiliser of any collection of $k$-subsets is a direct product of symmetric groups --- the only symmetric group containing no odd permutations is the trivial group. Thus, we deduce from Theorem~\ref{main} the formula of Mecenero and Spiga.
\begin{thm}{\cite[Theorem 1.1]{MeSp}}
Let $n>2k$ be positive integers. Then $b(S_{n,k})$ is the minimum $l$ such that \begin{align*}\label{eq:5}
\sum_{\substack{\pi\vdash n\\
\pi=(1^{c_1},2^{c_2},\ldots,n^{c_n})}}(-1)^{n-\sum_{i=1}^nc_i}\frac{n!}{\prod_{i=1}^ni^{c_i}c_i!}\left(\sum_{\substack{\eta\vdash k\\
\eta=(1^{b_1},2^{b_2},\ldots,k^{b_k})}}\prod_{j=1}^k{c_j\choose b_j}\right)^l&\ne 0.
\end{align*}
\end{thm}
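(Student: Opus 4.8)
The plan is to apply Theorem~\ref{main} directly to $G=\S_{n,k}$ with the base-controlling homomorphism $\phi=\mathrm{sgn}$, and then unwind the inner product $\langle\mathrm{sgn},\chi^l\rangle$ into the explicit combinatorial sum in the statement. The only genuine content beyond Theorem~\ref{main} is (i) verifying that $\mathrm{sgn}$ really is base-controlling for $\S_{n,k}$, and (ii) a bookkeeping computation expressing $\langle\mathrm{sgn},\chi^l\rangle$ as a sum over pairs of partitions.

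First I would justify that $\mathrm{sgn}$ is base-controlling. Given a tuple $\mathcal{A}=(A_1,\dots,A_r)$ of $k$-subsets of $[n]$, the pointwise stabiliser $(\S_n)_{\mathcal{A}}$ is the setwise stabiliser of the partition of $[n]$ into the atoms of the Boolean algebra generated by $A_1,\dots,A_r$; concretely it is a Young subgroup $\prod_i \S_{m_i}$ where the $m_i$ are the atom sizes. Such a Young subgroup is trivial precisely when every $m_i=1$, and it is contained in the alternating group precisely when it contains no transposition, i.e.\ again when every $m_i=1$. Hence $(\S_n)_{\mathcal{A}}\leq\ker(\mathrm{sgn})=\A_n$ if and only if $\mathcal{A}$ is a base, which is exactly the base-controlling condition. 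Thus Theorem~\ref{main} gives $b(\S_{n,k})=\min\{l : \langle\mathrm{sgn},\chi^l\rangle\neq 0\}$, where $\chi$ is the permutation character of $\S_{n,k}$.

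Next I would compute $\langle\mathrm{sgn},\chi^l\rangle=|\S_n|^{-1}\sum_{g\in\S_n}\mathrm{sgn}(g)\chi(g)^l$ by summing over conjugacy classes. A permutation $g$ of cycle type $\pi=(1^{c_1},2^{c_2},\dots,n^{c_n})$ has $\mathrm{sgn}(g)=(-1)^{n-\sum_i c_i}$ and its class has size $n!/\prod_i i^{c_i}c_i!$, so the prefactor in the displayed formula is accounted for. For $\chi(g)$, the number of $k$-subsets of $[n]$ fixed by $g$ equals the number of ways to choose, for each cycle length $j$, some $b_j$ of the $c_j$ cycles of length $j$ to include entirely, subject to $\sum_j j b_j=k$ --- that is, $\chi(g)=\sum_{\eta\vdash k,\ \eta=(1^{b_1},\dots,k^{b_k})}\prod_{j}\binom{c_j}{b_j}$, which is the inner sum. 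Raising this to the $l$-th power and reorganising the sum over classes yields precisely the displayed expression, and the claim follows.

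I do not expect a serious obstacle here: the result is an essentially mechanical specialisation of Theorem~\ref{main}. The one point requiring a little care is making sure the fixed-point count $\chi(g)$ is correctly expressed as a sum over $\eta\vdash k$ with the convention that $\binom{c_j}{b_j}=0$ when $b_j>c_j$ (so that cycle types of $g$ not admitting such a selection contribute zero), and checking the sign and class-size formulae are transcribed without error. The hypothesis $n>2k$ is inherited from the ambient setting ensuring $\S_{n,k}$ is the relevant primitive action, but plays no role in the character computation itself.
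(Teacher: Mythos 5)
Your proposal is correct and follows essentially the same route as the paper: verify that $\mathrm{sgn}$ is base-controlling because pointwise stabilisers of tuples of $k$-subsets are direct products of symmetric groups (trivial if and only if they avoid odd permutations), apply Theorem~\ref{main}, and expand $\langle\mathrm{sgn},\chi^l\rangle$ class-by-class using the standard cycle-type formulae for the sign, class size, and fixed $k$-subset count.
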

\begin{proof}
Since $\mathrm{sgn}$ is base-controlling, $b(\S_{n,k})$ is the minimum $l$ such that $\langle \mathrm{sgn},\chi^l\rangle\ne 0$. Let $C_1,C_2,\dots, C_m$ be the conjugacy classes of $\S_{n,k}$, and let $g_i$ be a representative of the class $C_i$ for each $i$. Then $$\langle \mathrm{sgn},\chi^l\rangle=n!^{-1}\sum_{i=1}^m\mathrm{sgn}(g_i)|C_i|\chi(g_i)^l.$$ Now, recall that the conjugacy classes of $\S_{n,k}$ are in correspondence with the partitions of $n$ --- it is easy to see that the sign of a permutation $g$ corresponding to the partition $\pi=(1^{c_1},2^{c_2},\ldots,n^{c_n})$ is exactly $(-1)^{n-\sum_{i=1}^nc_i}$. Moreover, it is well-known that the size of the conjugacy class of such an element is precisely $\frac{n!}{\prod_{i=1}^ni^{c_i}c_i!}$. Similarly, one can calculate that $$\chi(g)=\sum_{\substack{\eta\vdash k\\
\eta=(1^{b_1},2^{b_2},\ldots,k^{b_k})}}\prod_{j=1}^k{c_j\choose b_j},$$ hence the result.
\end{proof}

We now point out that admitting a base-controlling homomorphism is a property of a permutation group, not of an abstract group. Indeed, as observed above, $\mathrm{sgn}$ is a base-controlling homomorphism for $\S_{n,k}$, however, it is not the case that $\mathrm{sgn}$ is always base-controlling for $\S_n$. Take for example $\S_{15}$ acting on the partitions of $[15]$ into exactly $3$ parts of size $5$, with permutation character $\chi$. By ~\cite[Theorem 1.1]{ms} this group has base size $3$, however one can compute using {\sf GAP}~\cite{gap} that $\langle \mathrm{sgn},\chi^2\rangle\ne 0$.

Our final application is towards determining the base size of large base groups. Given integers $n$ and $k$, define $\A_{n,k}$ to be the alternating group acting on the $k$-subsets of $[n]$. A permutation group $G$ is \emph{large base} if there exist integers $m$, $r$, and $k$ such that $\A_{m,k}^r \unlhd G \leq \S_{m,k} \wr \S_r$, where the wreath product acts with product action.
\begin{prop}
Let $m$, $r$, and $k$ be integers, and let $\A_{m,k}^r \unlhd G \leq \S_{m,k} \wr \S_r$. Let $\chi_{m-1}$ and $\chi_m$ be the permutation characters of $\S_{m-1,k}$ and $\S_{m,k}$, respectively. Then $$\min\{l \in \mathbb{N} : \langle\mathrm{sgn},\chi_{m-1}^l\rangle_{\S_{m-1}}>0\}\leq b(G)\leq\min\{l \in \mathbb{N} : \langle\mathrm{sgn},\chi_m^l\rangle_{\S_m}\geq r\}.$$ 
\end{prop}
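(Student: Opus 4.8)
The plan is to prove the two inequalities separately; each follows quickly from Theorems~\ref{main} and~\ref{wreath} once one adds a single combinatorial lemma. I use twice the fact, already noted for $\S_{n,k}$, that for every $j$ for which the action is faithful the sign character is a base-controlling homomorphism of $\S_{j,k}$ (a pointwise stabiliser in $\S_{j,k}$ is a direct product of symmetric groups, and so is trivial iff it contains no transposition), and I use the standing large-base hypothesis $m>2k$. For the upper bound, $G\leq\S_{m,k}\wr\S_r$ acts on the same set, so every base for $\S_{m,k}\wr\S_r$ is a base for $G$ and $b(G)\leq b(\S_{m,k}\wr\S_r)$; applying Theorem~\ref{wreath} to $\S_{m,k}$ (with its base-controlling homomorphism $\mathrm{sgn}$) and $P=\S_r$ gives $b(\S_{m,k}\wr\S_r)=\min\{l\in\mathbb{N}:\langle\mathrm{sgn},\chi_m^l\rangle_{\S_m}\geq D(\S_r)\}$, and $D(\S_r)=r$ because a partition of $\{1,\dots,r\}$ has trivial pointwise stabiliser in $\S_r$ precisely when all of its parts are singletons. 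This is the asserted upper bound.

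For the lower bound, $\A_{m,k}^r$ is a normal subgroup of $G$ acting coordinatewise on $\Omega^r$, so $b(G)\geq b(\A_{m,k}^r)=b(\A_{m,k})$; since $\mathrm{sgn}$ is base-controlling for $\S_{m-1,k}$, Theorem~\ref{main} identifies the left-hand side of the proposition with $b(\S_{m-1,k})$, so it suffices to prove $b(\A_{m,k})\geq b(\S_{m-1,k})$. I would use the standard encoding of bases: for $k$-subsets $A_1,\dots,A_l$ of $\{1,\dots,j\}$ let $\iota\colon\{1,\dots,j\}\to\{0,1\}^l$ be given by $\iota(x)_i=1$ iff $x\in A_i$; then every column of the $0$--$1$ matrix with rows $\iota(1),\dots,\iota(j)$ has sum $k$, the fibres of $\iota$ are the atoms of the Boolean algebra generated by the $A_i$, the list is a base for $\S_{j,k}$ iff $\iota$ is injective, and it is a base for $\A_{j,k}$ iff $\iota$ has at most one non-singleton fibre and that fibre (when it exists) has size $2$. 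Given a minimum base $A_1,\dots,A_l$ for $\A_{m,k}$, delete a point of $\{1,\dots,m\}$---any point if $\iota$ is injective, otherwise one of the two coincident points---so that the remaining $m-1$ images form a set $T$ of $m-1$ distinct vectors in $\{0,1\}^l$ whose column sum in coordinate $i$ is $k-v_i$, where $v\in\{0,1\}^l$ is the image of the deleted point. It now suffices to establish the lemma that any set of $N\geq 2k-1$ distinct vectors in $\{0,1\}^l$ with column sums $k-v_1,\dots,k-v_l$ (for some $v\in\{0,1\}^l$) can be transformed into a set of $N$ distinct vectors all of whose column sums are $k$: applying it with $N=m-1\geq 2k$ (using $m>2k$) produces a base for $\S_{m-1,k}$ of size $l$, whence $b(\S_{m-1,k})\leq l=b(\A_{m,k})$.

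The step I expect to be the main obstacle is the lemma, which I would prove by repairing the deficient columns (those $i$ with $v_i=1$) one at a time. A deficient column presently contains $k-1$ ones, hence at least $N-(k-1)\geq k$ zeros; changing one such zero to a one produces a repeated vector only when the vector obtained from the chosen one by putting a $1$ in coordinate $i$ is already present, and as those vectors have a $1$ in column $i$ there are at most $k-1$ such forbidden choices, leaving at least $N-2k+2\geq 1$ safe ones. A safe change increases only the $i$-th column sum, and only by one, so it leaves the zero-counts of all other columns untouched; iterating over the deficient columns therefore terminates with all column sums equal to $k$ and the size $N$ unchanged. This single estimate is exactly where $m>2k$ enters; everything else is a direct application of results already in hand.
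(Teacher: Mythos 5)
Your proof is correct, and the upper bound is handled exactly as in the paper: pass to $b(\S_{m,k}\wr\S_r)$, apply Theorem~\ref{wreath} with $D(\S_r)=r$. Where you genuinely diverge is the lower bound. The paper reduces to $b(\A_{m,k}^r)=b(\A_{m,k})=b(\S_{m-1,k})$ and simply cites \cite[Theorem 1.1]{dvrd} for the second equality; you instead prove the one inequality actually needed, $b(\S_{m-1,k})\leq b(\A_{m,k})$, from scratch via the $0$--$1$ matrix encoding and the column-repair lemma. Your counting there checks out: a deficient column has $k-1$ ones, so $N-k+1$ candidate zeros of which at most $k-1$ are forbidden, leaving $N-2k+2\geq 1$ safe flips when $N\geq 2k-1$, and each flip changes only the one column sum while preserving distinctness and cardinality, so the induction closes. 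The trade-off is clear: the paper's route is two lines but imports an external theorem (in fact a stronger statement, the full equality), while yours is self-contained modulo Theorems~\ref{main} and~\ref{wreath} at the cost of a page of combinatorics; your version also makes explicit where the standing hypothesis $m>2k$ enters (via $N=m-1\geq 2k-1$), which the proposition's statement leaves implicit. Both arguments rely on sgn being base-controlling for $\S_{m-1,k}$ and on $b(\A_{m,k}^r)=b(\A_{m,k})$ for the coordinatewise action, which you justify correctly.
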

\begin{proof}
We show that $b(\A_{m,k}^r)=\min\{l \in \mathbb{N} : \langle\mathrm{sgn},\chi_{m-1}^l\rangle_{\S_{m-1}}>0\}$, and that $b(\S_{m,k} \wr \S_r)=\min\{l \in \mathbb{N} : \langle\mathrm{sgn},\chi_m^l\rangle_{\S_{m}}\geq r\}$, from which the result will follow. 

Consider first the lower bound. Note that $b(\A_{m,k}^r)=b(\A_{m,k})=b(\S_{m-1,k})$ by \cite[Theorem 1.1]{dvrd}, hence the lower bound holds by Theorem~\ref{main}. That $b(\S_{m,k} \wr \S_r)=\min\{l \in \mathbb{N} : \langle\mathrm{sgn},\chi_m^l\rangle_{\S_{m}}\geq r\}$ follows immediately from Theorem~\ref{wreath} and the fact that $D(\S_r)=r$.
\end{proof}
\begin{rk}
Of course, if more is known about the structure of $G$ then one may apply Theorem~\ref{wreath} directly to obtain a more precise formula for $b(G)$.
\end{rk}

So far the only example of a group which admits a base-controlling homomorphism that we have seen is the symmetric group --- we conclude this section with an alternative example.

Let $G=\mathrm{PSL}_2(7):2$ acting sharply 3-transitively with point stabiliser $7:6$, and hence 2-point stabiliser $6$. Consider now the homomorphism $\phi: G\to\{1,-1\}$ with kernel $\mathrm{PSL}_2(7)$. The map $\phi$ is base-controlling: if $\mathcal{A}$ is such that $\phi(G_\mathcal{A})=1$, then since $\phi(7:6)=\phi(6)=\{1,-1\}$ it follows that $\mathcal{A}$ contains at least three distinct points. Since $G$ is sharply 3-transitive, any triple of distinct points forms a base for $G$, that is, $G_{\mathcal{A}}=1$ as desired. Of course, the above argument hints at a construction of many groups with a similar structure.

\end{document}